\newtheorem{theorem}{Theorem}[section]
\newtheorem{lemma}[theorem]{Lemma}
\theoremstyle{definition}
\theoremstyle{remark}
\numberwithin{equation}{section}
\newcommand{\FF}{{\mathbb{F}}}
\newcommand{\bC}{{\mathbf{C}}}
\newcommand{\bZ}{{\mathbf{Z}}}
\newcommand{\bF}{{\mathbf{F}}}
\newcommand{\bN}{{\mathbf{N}}}
\newcommand{\Aut}{{\operatorname{Aut}}}
\newcommand{\PSL}{{\operatorname{PSL}}}
\newcommand{\GL}{{\operatorname{GL}}}
\newcommand{\Ker}{{\operatorname{Ker}}}
\newcommand{\Out}{{\operatorname{Out}}}
\newcommand{\End}{{\operatorname{End}}}
\newcommand{\GF}{\mbox{GF}}
\let\nor=\triangleleft
\begin{document}

\title{On nilpotent and solvable quotients of primitive groups}

\author{Thomas Michael Keller}
\author{Yong Yang}
\address{Department of Mathematics, Texas State University, 601 University Drive, San Marcos, TX 78666, USA.}

\makeatletter
\email{keller@txstate.edu, yang@txstate.edu}
\makeatother

\subjclass[2000]{Primary 20B15; Secondary 20C20}
\date{}


\Large
\begin{abstract}
It is shown that if $G$ is a primitive permutation group on a set of size $n$, then any nilpotent quotient of $G$ has order at most $n^{\beta}$ and any solvable quotient of $G$ has order at most $n^{\alpha+1}$ where $\beta=\log 32/ \log 9$ and $\alpha=(3 \log (48)+\log (24))/ (3 \cdot \log (9))$. This was motivated by a result of Aschbacher and Guralnick ~\cite{AschGural}.
\end{abstract}

\maketitle
\section{Introduction} \label{sec:introduction8}
In ~\cite{AschGural}, Aschbacher and Guralnick studied the maximum abelian quotient of permutation groups and linear groups. They showed the following results.

Theorem $1^*$. Let $G$ be a primitive permutation group on a set of finite order $n$. Then $|G:G'| \leq  n$.

Theorem $2^*$. Let $G$ be a permutation group on a finite set of order $n$. Then $|G:G'| \leq 2^{n-1}$.

Theorem $3^*$. Let $V$ be a finite dimensional vector space over a finite field of characteristic $p$ and $G$ a subgroup of $\GL(V)$. If $O_p(G)=1$, then $|G:G'| < |V|$.

It is natural to ask what one can say about the maximum nilpotent quotient or maximum solvable quotient in the same context. Let $G$ be a finite group. We denote $G^*$ to be the normal subgroup of $G$ such that $G/G^*$ is the maximum nilpotent quotient. We denote $G^S$ to be the normal subgroup of $G$ such that $G/G^S$ is the maximum solvable quotient. Let $\beta=\log 32/ \log 9$, $\alpha=(3 \log (48)+\log (24))/ (3 \cdot \log (9))$ and $\lambda=(24)^{1/3}$. In this note, we show the following results.

Theorem 1. Let $G$ be a primitive permutation group on a set of finite order $n$. Then $|G: G^*| \leq  n^{\beta}/2$ and $|G: G^S| \leq  n^{\alpha+1}/\lambda$.

Theorem 2. Let $G$ be a permutation group on a finite set of order $n$. Then $|G:G^*| \leq 2^{n-1}$ and $|G:G^S| \leq \lambda^{n-1}$.

Theorem 3. Let $V$ be a finite dimensional vector space over a finite field of characteristic $p$ and $G$ a subgroup of $\GL(V)$. If $O_p(G)=1$, then $|G:G^*| \leq |V|^{\beta}/2$ and $|G:G^S| \leq |V|^{\alpha}/\lambda$.


Theorems 2 and 3 are needed to prove Theorem 1. Theorem 3 depends on the classification of simple groups (one needs to know the relative sizes of outer automorphism groups, degrees of permutation representations, and dimensions of modules). 


The results of Aschbacher and Guralnick were motivated in answering a question of Tamagawa, which was prompted by ~\cite{IsaacsProblem}. A minor modification of the argument in ~\cite{IsaacsProblem} shows that $|G: G^*(H \cap H^g)| \leq |G: H|$. This was first observed by D. Cantor in the following form: If $\alpha$ and $\beta$ are conjugate over a field $K$, then any abelian extension $L/K$ with $L \subset K(\alpha, \beta)$ satisfies $|L: K| \leq |K(\alpha): K|$. The field theoretic version of Theorem 1 is:

Corollary 4. Let $K$ be a field with $M/K$ a minimal extension. If $L/K$ is an nilpotent extension with $L$ contained in the normal closure of $M$, then $|L: K| \leq |M:K|^{\beta}/2$. If $L/K$ is a solvable extension with $L$ contained in the normal closure of $M$, then $|L: K| \leq |M:K|^{\alpha+1}/\lambda$.

In order to prove these results, we need the following Lemmas.

\begin{lemma} \label{normalinduction}
Let $\pi=G \rightarrow A$ be a surjective group homomorphism and $K = \Ker \pi$. Then $|G:G^*|=|A: A^*||K \cap G^*| \leq |A:A^*||K:K^*|$ and $|G:G^S|=|A: A^S||K \cap G^S| \leq |A:A^S||K:K^S|$.
\end{lemma}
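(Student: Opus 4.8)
The plan is to reduce everything to two facts: that $\pi(G^*) = A^*$ (and likewise $\pi(G^S) = A^S$), and that the classes of nilpotent and of solvable groups are each closed under taking subgroups and homomorphic images.

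First I would check $\pi(G^*) = A^*$. Since $\pi$ is onto, $A/\pi(G^*)$ is a quotient of the nilpotent group $G/G^*$, hence nilpotent, so $A^* \le \pi(G^*)$; conversely $G/\pi^{-1}(A^*) \cong A/A^*$ is nilpotent, so $G^* \le \pi^{-1}(A^*)$, i.e. $\pi(G^*) \le A^*$. This gives equality, and the same argument with ``solvable'' in place of ``nilpotent'' yields $\pi(G^S) = A^S$.

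Next I would restrict $\pi$ to the normal subgroup $G^*$: its image is $\pi(G^*) = A^*$ and its kernel is $G^* \cap K$, so by the first isomorphism theorem (all groups here being finite) $|G^*| = |A^*|\,|G^* \cap K|$. Dividing $|G| = |A|\,|K|$ by this yields
\[
|G:G^*| = |A:A^*|\cdot|K:K\cap G^*|,
\]
and the analogous identity for $G^S$. Finally, $K/(K\cap G^*) \cong KG^*/G^*$ is a subgroup of the nilpotent group $G/G^*$, hence nilpotent, so $K^* \le K\cap G^*$ and $|K:K\cap G^*|\le |K:K^*|$; substituting into the displayed equality gives the claimed bound, and the solvable case is word-for-word the same (using that $K/(K\cap G^S)$ embeds in the solvable group $G/G^S$).

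There is no genuine obstacle here; the only points requiring care are that $G^*$ and $G^S$ are normal (in fact characteristic) in $G$, so that their images and intersections under $\pi$ behave as asserted, and the elementary closure properties of nilpotency and solvability under subgroups and quotients, which are exactly what drive both the identification $\pi(G^*)=A^*$ and the concluding inequality.
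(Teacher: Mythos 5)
Your proof is correct and complete. The paper itself states this lemma without any proof (it is the analogue of the abelian-quotient lemma in Aschbacher--Guralnick), so there is no argument to compare against; your route via $\pi(G^*)=A^*$, the first isomorphism theorem applied to $\pi|_{G^*}$, and the embedding $K/(K\cap G^*)\cong KG^*/G^*\le G/G^*$ is exactly the standard one and every step checks out, including the implicit use that $G^*$ (resp.\ $G^S$) is the \emph{smallest} normal subgroup with nilpotent (resp.\ solvable) quotient, which holds because these classes are closed under subgroups and finite direct products. One incidental point: the equality as printed in the statement reads $|G:G^*|=|A:A^*||K\cap G^*|$, but your derivation shows the intended (and correct) form is $|G:G^*|=|A:A^*|\,|K:K\cap G^*|$, which is what is needed for the subsequent inequality $\le |A:A^*||K:K^*|$; so your write-up also silently corrects a typo in the statement.
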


\begin{theorem}\label{maxsubgroup}
Let $G$ be a group which is not cyclic of prime order. Then $G$ has a proper subgroup $H$ such that $|H| \geq |G|^{1/2}$.
\end{theorem}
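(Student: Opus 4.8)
We argue by induction on $|G|$ (for nontrivial $G$; the case $|G|$ prime is excluded by hypothesis). The plan is to split according to whether $G$ has a nontrivial proper normal subgroup: the non-simple case reduces cleanly to smaller groups, while the simple case is handled separately and is where the real content lies.

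Suppose first that $G$ is not simple, and let $N$ be a maximal proper normal subgroup, so that $1 \neq N \neq G$ and $G/N$ is simple. If $|N| \geq |G|^{1/2}$, take $H := N$. So assume $|N| < |G|^{1/2}$; then $[G:N] = |G|/|N| > |G|^{1/2} > |N| \geq 2$. If $G/N$ is non-abelian simple, then it is not cyclic of prime order and $|G/N| < |G|$, so by induction $G/N$ has a proper subgroup $\bar H$ with $[G/N:\bar H] \leq |G/N|^{1/2} \leq |G|^{1/2}$; its preimage $H$ in $G$ is proper with $[G:H] = [G/N:\bar H] \leq |G|^{1/2}$. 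If instead $G/N \cong \bZ/q\bZ$ for a prime $q$, then $|G| = q|N|$ and $q = [G:N] > |N|$, so $q \nmid |N|$; hence the $q$-part of $|G|$ is exactly $q$ and a Sylow $q$-subgroup $Q \in \Syl_q(G)$ has order $q$. Then $Q$ is proper (since $|G| = q|N| > q$) and $[G:Q] = |N| < |G|^{1/2}$. This disposes of the non-simple case.

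It remains to handle $G$ simple. If $G$ is abelian then $G \cong \bZ/p\bZ$, which is excluded, so assume $G$ is non-abelian simple; we must exhibit a proper subgroup of index at most $|G|^{1/2}$. This is the step that uses the classification of finite simple groups, via the known minimal degrees of faithful transitive permutation representations: for $A_n$ with $n \geq 5$ the stabilizer $A_{n-1}$ has index $n$ and $n^2 \leq n!/2 = |A_n|$; for a group of Lie type one uses the action on the cosets of a maximal parabolic, whose index is a strictly smaller power of $q$ than the one expressing $|G|$, so the square of that index is at most $|G|$ (the finitely many exceptional small cases — $\PSL(2,q)$ for small $q$, the Suzuki and Ree groups, etc. — being checked directly); and the sporadic groups are verified by inspection. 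I expect this simple-group step to be the only real obstacle: the non-simple case is a routine induction, but proving that every non-abelian finite simple group has a proper subgroup of index at most $|G|^{1/2}$ requires the classification and a genuine, if short, case analysis. Finally, the bound is sharp, as $G = \bZ/p\bZ \times \bZ/p\bZ$ has largest proper subgroup of order exactly $|G|^{1/2}$.
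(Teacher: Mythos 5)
The paper does not actually prove this statement: it cites it as the main theorem of A.~Lev, \emph{On large subgroups of finite groups} (reference \cite{ALev}), so there is no in-paper argument to compare against line by line. Your reduction to the simple case is correct and complete: choosing $N$ maximal normal makes $G/N$ simple; if $|N|\ge |G|^{1/2}$ you are done; otherwise $[G:N]>|G|^{1/2}>|N|$, and the two subcases (non-abelian simple quotient handled by induction on the quotient, cyclic quotient of prime order $q>|N|$ handled by a Sylow $q$-subgroup of order $q$ and index $|N|<|G|^{1/2}$) are both sound. The remaining assertion --- that every non-abelian finite simple group has a proper subgroup of index at most $|G|^{1/2}$ --- is true and is indeed the CFSG-dependent heart of Lev's theorem, but in your write-up it stays at the level of a sketch: the $A_n$ case is verified, while for Lie type groups the inequality between the minimal parabolic index and $\sqrt{|G|}$, the small exceptional cases of $\PSL_2(q)$, the Suzuki and Ree groups, and the sporadic groups are asserted rather than checked. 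So your proposal is a correct proof strategy (and essentially the standard one), with the genuinely hard step outsourced to known facts about minimal permutation degrees --- which is, in effect, what the paper itself does by citing \cite{ALev}. If you want a self-contained argument you would need to carry out that case analysis; otherwise the honest course is to cite it, as the authors do.
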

\begin{proof}
This is the main theorem of ~\cite{ALev}.
\end{proof}

\begin{lemma} \label{nilpotentaffine}
  Let $G$ be a finite group and $V$ a finite irreducible faithful $G$-module. Then $(GV)^*=G^* V$.
\end{lemma}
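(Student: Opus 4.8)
The plan is to establish the two inclusions $(GV)^*\subseteq G^*V$ and $G^*V\subseteq (GV)^*$ separately. The first is immediate: $G^*V$ is normal in $GV$ (the complement $G$ normalises both $G^*$ and $V$, and $V$ normalises $G^*V$ because $vg^*v^{-1}\in VG^*$ for all $v\in V$, $g^*\in G^*$), and the quotient $GV/G^*V\cong G/G^*$ is nilpotent; hence $(GV)^*\subseteq G^*V$. All the work is in the reverse inclusion.

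For that, I would first record the elementary fact that $(H/M)^*=H^*M/M$ for any $M\triangleleft H$. Taking $H=GV$ and $M=V$, this gives $(GV)^*/V=(GV/V)^*=G^*$ \emph{provided} we know $V\subseteq (GV)^*$, and then $(GV)^*=G^*V$ follows at once. So everything reduces to proving $V\subseteq (GV)^*$. The structural point here is that, since $V$ is an irreducible $G$-module, the only $GV$-normal subgroups contained in $V$ are $0$ and $V$; in other words $V$ is a minimal normal subgroup of $GV$. Now suppose for contradiction that $V\not\subseteq (GV)^*$. Then $V\cap (GV)^*$ is a $GV$-normal subgroup strictly contained in $V$, hence $V\cap (GV)^*=0$, so $[(GV)^*,V]\subseteq V\cap (GV)^*=0$ and therefore $(GV)^*\subseteq C_{GV}(V)$. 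Faithfulness forces $C_{GV}(V)=V$ (an element $vg$ with $v\in V$, $g\in G$ centralises $V$ if and only if $g\in C_G(V)=1$), so $(GV)^*\subseteq V$ and thus $(GV)^*=(GV)^*\cap V=0$; that is, $GV$ is nilpotent. But then the nonzero normal subgroup $V$ meets $Z(GV)$ nontrivially, and $V\cap Z(GV)$ --- a nonzero $GV$-normal subgroup of the minimal normal subgroup $V$ --- must equal $V$, so $V\subseteq Z(GV)$ and $G$ acts trivially on $V$, contradicting faithfulness (we may assume $G\neq 1$). This contradiction yields $V\subseteq (GV)^*$, and with it the lemma.

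The only steps that are not pure formalism are the identification $C_{GV}(V)=V$ coming from faithfulness, and the remark that irreducibility makes $V$ a minimal normal subgroup of $GV$, which is exactly what prevents $V\cap (GV)^*$ from being a proper nonzero subgroup and thereby forces the chain ``$(GV)^*=0$, hence $GV$ nilpotent, hence $G$ trivial on $V$.'' I expect this to be the crux; the manipulations with nilpotent residuals and the quotient map $GV\to G$ are routine.
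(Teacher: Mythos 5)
Your proof is correct and follows essentially the same route as the paper's: reduce to showing $V\subseteq (GV)^*$, set $W=V\cap (GV)^*$, use irreducibility to force $W=1$, then faithfulness to force $(GV)^*\leq C_{GV}(V)=V$ and hence $(GV)^*=1$, and finally derive a contradiction from the nilpotency of $GV$. The only (minor) divergence is the endgame --- you invoke the fact that a nontrivial normal subgroup of a nilpotent group meets the centre, while the paper argues that a $p$-group has no faithful irreducible module in characteristic $p$ --- and you rightly flag that $G\neq 1$ must be assumed, a hypothesis the paper leaves implicit even though the statement fails for $G=1$.
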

\begin{proof}
Clearly $(GV)/(G^{*} V)$ is nilpotent, so $(GV)^*$ is a subgroup of $G^{*} V$.

Also clearly $G^*$ is a subgroup of $(GV)^*$, so it remains to show that $V$ is a subgroup of $(GV)^*$.

Let $W$ be the intersection of $V$ and $(GV)^*$. We want to show that $W=V$ and, working towards a contradiction, assume $W<V$.

Since $G$ acts on $W$ and $V$ is irreducible, we conclude that $W=1$. Therefore $(GV)^*$ and $V$ are normal subgroups of $GV$ which have trivial intersection. Hence $[(GV)^*,V]=1$, and as $V$ is a faithful $G$-module, this implies that $(GV)^* \leq V$. So as $W=1$, we altogether get $(GV)^*=1$, so that $GV$ is nilpotent. As $V$ is a faithful $G$-module, this forces $GV$ to be a $p$-group, where $p$ is the characteristic of $V$. But a $p$-group does not have a faithful irreducible module in characteristic $p$, so $GV=V$. Then $W=V>1$, a contradiction.
\end{proof}

\section{Main Theorems} \label{sec:maintheorem}

\begin{theorem} \label{pernilpotent}
Let $G$ be a group of permutations on a set $S$ of order $n$. Then $|G: G^*| \leq 2^{n-1}$.
\end{theorem}
\begin{proof}
The proof is by induction on $|G|$. So assume $G$ is a minimal counterexample.

Step 1. $G$ is nilpotent. If not, choose $M$ maximal in $G$ with $G = M G^*$. Then $|G:G^*| =|M: M \cap G^*| \leq |M:M^*| \leq 2^{n-1}$  by induction.

Step 2. $G$ is transitive on $S$. If $G$ acts on a proper subset $Y$ of $S$ with $0 \neq m = |Y| < n$, then apply Lemma ~\ref{normalinduction} with $\pi$ the representation of $G$ on $Y$ to obtain $|G: G^*| \leq 2^{m-1} 2^{(n-m-1)} < 2^{n-1}$. Similarly, if $G$ is a $p$-group, we see that $G$ is transitive.

Step 3. $G$ is a $p$-group for some prime $p$. If not then $G= P \times Q$ where $P$ is a $p$-group and $Q$ is a $p'$-group. Let $H = G_x$ be the stabilizer of $x$ in $S$. Then $H = M \times N$ with $M = H \cap P$ and $N = H \cap Q$. Note that $P$ acts faithfully on the cosets of $M$ as does $Q$ on the cosets of $N$. So $|G: G^*| = |P: P^*||Q: Q^*| \leq 2^{a-1} 2^{b-1} < 2^{n-1}$, where $a = |P:M|$ and $b = |Q:N|$.

Step 4. Since $n$ is a positive integer, we write $n$ to the base $p$:

$n=n_0+n_1 p+n_2 p^2 \dots + n_k p^k$ where $0 \leq n_i <p$ for each $i$.

Then the Sylow $p$-subgroups of $S_n$ have order $p^{\nu(n)}$ where $\nu(n)=n_1 + n_2 (p^2-1)/(p-1) + \dots + n_k (p^k-1)/(p-1)$. If $n_i \geq 1$, then $p^{n_i(p^i-1)/(p-1)} \leq 2^{n_i (p^i -1)} \leq 2^{n_i p^i -1}$ for all $p$. This implies that $p^{\nu(n)} \leq 2^{n-1}$ for all $p$.
\end{proof}

\begin{theorem} \label{primitivepersolvable}
Let $G$ be a primitive permutation group on a set $S$ of order $n$. Then $|G: G^S| \leq {\lambda}^{n-1}$.
\end{theorem}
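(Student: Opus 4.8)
Rather than induct directly, I would split on the socle $N=\mathrm{soc}(G)$, i.e.\ on the O'Nan--Scott type of $G$. The unifying point is that a minimal normal subgroup of $G$ that is either perfect, or is the (unique) regular normal subgroup of an affine group, is forced into $G^{S}$, so that in every case except one the estimate collapses onto a bound for $|G:N|$. The surviving case, $G$ affine and solvable, is exactly where the constant $\lambda=24^{1/3}$ is produced, and it is the only real work.

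\textbf{Non-abelian socle, and the non-solvable affine case.} If $N$ is non-abelian it is a direct product of non-abelian simple groups, hence perfect, so it has no non-trivial solvable quotient and $N\le G^{S}$; thus $|G:G^{S}|\le|G:N|$. By O'Nan--Scott, $G/N$ embeds in $\Out(T)\wr S_{k}$ when $N\cong T^{k}$, and in each type $|G:N|$ is negligible beside $\lambda^{n-1}$ (at most a fixed power of $\log n$ in the almost simple and diagonal types, and at worst sub-exponential otherwise) by the CFSG bounds on $|\Out(T)|$; since $n\ge 5$ here, only a handful of small degrees need separate inspection. The non-solvable affine case is identical in spirit: there the affine socle $V\cong\FF_{p}^{d}$ is the \emph{unique} minimal normal subgroup of $G$, so $G^{S}\ne1$ forces $V\le G^{S}$, whence $|G:G^{S}|\le|G:V|=|G_{0}|<|\GL(d,p)|<p^{d^{2}}<\lambda^{p^{d}-1}$ for every prime power $p^{d}$ admitting a non-solvable irreducible linear group (the least being $\GL(3,2)$, $p^{d}=8$).

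\textbf{The affine solvable case.} Now $n=p^{d}$, $G=V\rtimes G_{0}$ with $V\cong\FF_{p}^{d}$ and $G_{0}\le\GL(d,p)$ solvable and irreducible on $V$; irreducibility forces $O_{p}(G_{0})=1$, since a non-trivial normal $p$-subgroup would have a $G_{0}$-invariant, hence full, fixed space on $V$. As $G$ is solvable, $G^{S}=1$ and the claim is the arithmetic inequality $|G|=p^{d}|G_{0}|\le\lambda^{p^{d}-1}$. I would obtain this from the structure theory of irreducible solvable linear groups (Suprunenko, Huppert, Wolf and P\'alfy), or equivalently from the companion $\GL$-bound $|G_{0}:G_{0}^{S}|\le|V|^{\alpha}/\lambda$ valid when $O_{p}=1$: decomposing $G_{0}$ through its imprimitivity, tensor and symplectic-type structure bounds $|G_{0}|$ by a product over the pieces, the worst single piece being $\GL(2,2)\cong S_{3}$ acting on $\FF_{2}^{2}$. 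That piece is precisely what fixes the constant, since $|\FF_{2}^{2}|\cdot|\GL(2,2)|=4\cdot6=24=\lambda^{3}=\lambda^{|\FF_{2}^{2}|-1}$. Using the polynomial $\GL$-bound as a black box, the target reads $|V|^{\alpha+1}/\lambda\le\lambda^{|V|-1}$, which holds exactly for $|V|\ge5$; the remaining prime powers $p^{d}\in\{2,3,4\}$ are checked by hand, the case $p^{d}=4$, $G\le S_{4}$ of order $24=\lambda^{3}$, being the unique tight configuration.

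\textbf{Main obstacle.} The one non-routine ingredient is the affine solvable estimate with the \emph{sharp} constant: a soft argument yields some exponential bound $c^{\,n-1}$, but forcing $c=24^{1/3}$ requires either the classification of maximal irreducible solvable linear groups or P\'alfy's polynomial bound as a black box, together with the small-but-delicate verification that the polynomial bound overtakes the exponential one exactly at $n=5$, so that only $n\in\{2,3,4\}$ (effectively, only $S_{4}$ on four points) must be examined directly. Everything afterwards is routine: to pass from primitive groups to arbitrary permutation groups one takes a maximal block system, embeds $G\le H\wr K$ with $H$ primitive of degree $b$ and $K$ of degree $n/b$, and combines Lemma~\ref{normalinduction} with induction and the present theorem to get $|G:G^{S}|\le(\lambda^{b-1})^{n/b}\,\lambda^{n/b-1}=\lambda^{n-1}$.
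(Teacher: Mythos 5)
Your proposal is essentially correct, but it takes a genuinely different route from the paper. The paper's proof is a two-line appeal to Mar\'oti's Corollary~1.4: a primitive group of degree $n$ either contains $A_n$ (whence $|G:G^S|\le 2$), or lies in an explicit finite exceptional list checked by GAP, or already satisfies $|G|\le\lambda^{n-1}$ outright; no structural analysis of $G^S$ is needed at all. You instead run an O'Nan--Scott argument: a perfect socle is absorbed into $G^S$, reducing everything to bounding $|G:\mathrm{soc}(G)|$, except in the solvable affine case, which you correctly identify as the heart of the matter and where Wolf's bound $|G_0|\le|V|^{\alpha}/\lambda$ for solvable irreducible $G_0\le\GL(V)$ gives $|G|\le|V|^{\alpha+1}/\lambda\le\lambda^{|V|-1}$ for $|V|\ge5$, with $n\in\{2,3,4\}$ checked by hand and $S_4=A\Gamma L(1,4)$-type degree~$4$ the unique extremal case $24=\lambda^3$. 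Your approach buys an explanation of where $\lambda$ actually comes from, which the paper's proof hides inside a GAP computation; the paper's approach buys brevity and avoids the socle case analysis here (it defers a version of that analysis to Lemma~\ref{primitivefirst}, where the sharper bound $n^{\alpha}/\lambda$ is proved for non-affine primitive groups using Aschbacher--Scott and Lemma~\ref{boundofouter}). Two caveats on your version: first, your non-abelian-socle estimates (``negligible,'' ``sub-exponential'') are asserted rather than proved and would need the CFSG-based comparisons of $|\Out(T)|$ against minimal degree, i.e.\ essentially the content of Lemma~\ref{boundofouter}; second, for the solvable affine case you must cite Wolf's theorem (Manz--Wolf, Theorem~3.5(a)) rather than the paper's Theorem~\ref{linearbound}, since the latter is proved using Theorem~\ref{persolvable}, which in turn rests on the very statement you are proving --- quoting it here would be circular. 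With those two points repaired, your argument stands on its own.
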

\begin{proof}
By ~\cite[Corollary 1.4]{Maroti}, we know that $G$ either contains $A_n$ or is one of the groups in the exceptional list. If $G$ contains $A_n$, then $|G:G^S| \leq 2 \leq \lambda^{n-1}$. If $G$ is one of the groups in the exceptional list, it is easy to check using GAP~\cite{GAP} that it satisfies the requirement.
\end{proof}

\begin{theorem} \label{persolvable}
Let $G$ be a group of permutations on a set $S$ of order $n$. Then $|G: G^S| \leq {\lambda}^{n-1}$.
\end{theorem}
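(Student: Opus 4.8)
The plan is to imitate the proof of Theorem~\ref{pernilpotent}, but with the reduction to $p$-groups replaced by a reduction to primitive groups, after which Theorem~\ref{primitivepersolvable} closes the argument. I would take a counterexample $G$ acting on $S$ with the pair $(n,|G|)$ lexicographically minimal, so that $|G:G^S|>\lambda^{n-1}$, and run a chain of reductions. First I would show $G$ is solvable: if not, then $G^S\not\le\Phi(G)$, since otherwise $G/\Phi(G)$ and hence $G$ would be solvable, so some maximal subgroup $M<G$ does not contain $G^S$ and therefore $MG^S=G$. Since $M/(M\cap G^S)\cong G/G^S$ is solvable we get $M^S\le M\cap G^S$, whence $|G:G^S|=|M:M\cap G^S|\le|M:M^S|$, and the inductive hypothesis applied to the permutation group $M$ (same degree $n$, strictly smaller order) gives $|M:M^S|\le\lambda^{n-1}$, a contradiction. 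Hence $G$ is solvable, $G^S=1$, and the task reduces to proving $|G|\le\lambda^{n-1}$.

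Next I would reduce to the primitive case. If $G$ has orbits $\Omega_1,\dots,\Omega_r$ with $r\ge 2$, then $G$ embeds into the direct product of its transitive constituents on the $\Omega_i$, each a transitive solvable group of degree $n_i=|\Omega_i|<n$; induction on the degree gives $|G|\le\prod_i\lambda^{n_i-1}=\lambda^{n-r}<\lambda^{n-1}$, a contradiction, so $G$ is transitive. If $G$ is moreover imprimitive, fix a block system with $b$ blocks of size $c$, so $bc=n$ and $1<b,c<n$. Let $N\trianglelefteq G$ be the kernel of the action on the blocks, let $K=G/N$ be the transitive solvable group of degree $b$ it induces, and for a block $\Delta$ let $L$ be the transitive solvable group of degree $c$ induced by $G_\Delta$ on $\Delta$. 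Since $N$ stabilizes every block, $N$ embeds into the product of its restrictions to the blocks, each restriction lying in a conjugate of $L$; hence $|N|\le|L|^b$ and $|G|=|N|\,|K|\le|L|^b|K|$. Induction on the degree then gives $|G|\le\lambda^{b(c-1)}\lambda^{b-1}=\lambda^{bc-1}=\lambda^{n-1}$, again a contradiction, so $G$ is primitive.

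Finally, for primitive $G$ Theorem~\ref{primitivepersolvable} gives $|G|=|G:G^S|\le\lambda^{n-1}$, contradicting the choice of $G$; hence no counterexample exists. Since the proof is a chain of reductions with all genuine content absorbed into Theorem~\ref{primitivepersolvable} (Mar\'oti's classification plus the GAP checks), there is no single deep obstacle. The points requiring care are: the existence of the maximal subgroup $M$ in the solvability step, justified by the Frattini-quotient remark; the verification in the imprimitive step that $L$ and $K$ really are transitive solvable groups of degrees $c$ and $b$ and that $|G|\le|L|^b|K|$, the identity $b(c-1)+(b-1)=bc-1$ being exactly what the normalization $\lambda=24^{1/3}$ is designed to exploit; and organizing the induction so that the order-decreasing solvability step and the degree-decreasing orbit and block steps are both legitimate, which the lexicographic induction on $(n,|G|)$ secures. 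I expect the imprimitive/wreath step to be the one most prone to slips.
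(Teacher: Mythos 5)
Your argument is correct, and its skeleton --- minimal counterexample, reduction through a block system exploiting the exponent identity $b(c-1)+(b-1)=bc-1$, and a final appeal to Theorem~\ref{primitivepersolvable} in the primitive case --- is the same as the paper's. The differences are in the bookkeeping. The paper never reduces to $G$ solvable: it tracks $|G:G^S|$ directly, applying Lemma~\ref{normalinduction} first to the action on the block set and then to the chain of kernels $K_0\supseteq K_1\supseteq\cdots$ obtained by successively restricting the block kernel to the individual blocks, bounding each factor $|K_{i-1}:K_{i-1}^S K_i|\le\lambda^{m-1}$ by induction on order. You instead import Step~1 of Theorem~\ref{pernilpotent} (the Frattini/maximal-subgroup argument) to force $G$ solvable first, after which $|G:G^S|=|G|$ and the orbit and block reductions become plain order estimates, $|G|\le|L|^b|K|$ via the embedding of the block kernel into the product of its restrictions to the blocks. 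What your route buys is transparency: the intransitive case is handled explicitly (the paper's single ``nontrivial decomposition'' sentence glosses over orbits of unequal size, where $n=mr$ fails), and Lemma~\ref{normalinduction} is not needed at all. What it costs is the extra maximal-subgroup step and the need for lexicographic induction on $(n,|G|)$, since $M$ has the same degree as $G$; you set that up correctly, so there is no gap.
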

\begin{proof}
The proof is by induction on $|G|$. So assume $G$ is a minimal counterexample. Thus $G$ is primitive on $V$.
Proof. If not, there is a nontrivial decomposition $S = \bigcup_i S_i$ with $G$ permuting $X = \{S_1, \dots , S_r \}$ and $\bN_G(S_1)$ acting primitively on $S_1$ where $|S_1|=m$ and $n=m r$. Let $\pi$ be the permutation representation of $G$ on $X$ and $K = \ker \pi$. Set $K_0=K$ and $K_{i+1} =\{g \in K_i \ |\ g$ acts trivially on $S_{i+1} \}$. By Lemma ~\ref{normalinduction}, $|G: G^S| = s_0 s_1 \dots s_r$, where $s_0 =|G:G^S K|$ and $s_i = |K_{i-1} : (K_{i-1} \cap G^S) K_i| \leq |K_{i-1}: K_{i-1}^S K_i|$ for $i \geq 1$. By induction, $s_0 \leq \lambda^{r-1}$, $s_i \leq |K_{i-1}:K_{i-1}^S K_i| \leq \lambda^{m-1}$, thus $|G: G^S| \leq \lambda^{n-1}$.
\end{proof}

\begin{lemma} \label{boundofouter}
Let $L$ be a finite nonabelian simple group and $M$ a proper subgroup of $L$ with $|L: M| = k$.
\begin{enumerate}
\item If $L \not \cong A_6$, then $2|\Out L|<k$.
\item $2|\Out L|<k^{\beta}$, $\lambda |\Out L|<k^{\alpha} \leq |L|^{\alpha/2}$ and $2|\Out L| < \sqrt {|L|}$.
\item If $L \leq \PSL_d(q)$, then $2 |\Out L| <q^d/(q-1)$.
\end{enumerate}
\end{lemma}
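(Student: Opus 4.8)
The plan is to prove the three parts of Lemma~\ref{boundofouter} by reduction to part (1), which in turn will be checked against the classification of finite simple groups. The key observation is that a nonabelian simple group $L$ acting on the cosets of a proper subgroup $M$ of index $k$ yields a faithful transitive permutation representation of degree $k$, so $k$ is at least the minimal faithful permutation degree of $L$; hence it suffices to verify, for each family of simple groups, the inequality $2|\Out L| < m(L)$ where $m(L)$ is that minimal degree. This is a finite computation in the sense of running through the classification: for the alternating groups $A_n$ ($n \geq 5$, $n \neq 6$) one has $|\Out A_n| = 2$ and $m(A_n) = n \geq 5 > 4$; for $A_6$ the exception is genuine since $|\Out A_6| = 4$ and $m(A_6) = 6$, so $2|\Out A_6| = 8 > 6$. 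For the sporadic groups $|\Out L| \leq 2$ and the minimal degrees are large, so the inequality is immediate. The main work is the groups of Lie type, where one must compare $|\Out L|$ — a product of diagonal, field, and graph automorphism contributions — with the known lower bounds for $m(L)$; these bounds (due to Cooperstein, Kantor, Vasilyev and others, as collected in the standard references) comfortably dominate $2|\Out L|$ with the single small exception already noted.

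Once part (1) is in hand, part (2) follows by elementary manipulation together with Theorem~\ref{maxsubgroup}. Since $\beta = \log 32/\log 9 > 1$ and $k \geq 5 > 1$, we have $k < k^{\beta}$, so $2|\Out L| < k \leq k^{\beta}$ whenever $L \not\cong A_6$; for $A_6$ one checks directly that $2|\Out A_6| = 8 < 6^{\beta} = 6^{\log 32/\log 9}$ (indeed $6^{\beta} > 6^{1.57} > 18$). For the bound $2|\Out L| < \sqrt{|L|}$, apply Theorem~\ref{maxsubgroup}: since $L$ is nonabelian simple it is not cyclic of prime order, so it has a proper subgroup of index $k \leq |L|^{1/2}$, wait — Theorem~\ref{maxsubgroup} gives a subgroup of order at least $|L|^{1/2}$, i.e.\ of index $k \leq |L|^{1/2}$, and then $2|\Out L| < k \leq |L|^{1/2}$ when $L \not\cong A_6$, while for $A_6$ we have $8 < \sqrt{360}$. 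The inequality $\lambda|\Out L| < k^{\alpha} \leq |L|^{\alpha/2}$ is obtained the same way: $\lambda = 24^{1/3} < 3$, and $\alpha = (3\log 48 + \log 24)/(3\log 9)$, so $k^{\alpha} \geq 5^{\alpha}$; a short numerical check shows $\alpha > 1$ and $5^{\alpha}$ exceeds $\lambda|\Out L|$ in all cases (the $A_6$ case again being the tightest: $\lambda \cdot 4 < 6^{\alpha}$), and $k \leq |L|^{1/2}$ gives the final inequality $k^{\alpha} \leq |L|^{\alpha/2}$.

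For part (3), suppose $L \leq \PSL_d(q)$. The natural projective action of $\PSL_d(q)$ on the points of $\mathrm{PG}(d-1,q)$ has degree $(q^d - 1)/(q-1)$, and a celebrated theorem of Cooperstein (and Patton, refining work going back to Galois) states that, apart from a handful of small exceptions, the minimal faithful permutation degree of $\PSL_d(q)$ — and indeed of any simple section of it in this context — is exactly $(q^d-1)/(q-1)$. Since $L$ embeds in $\PSL_d(q)$ and is simple, its minimal faithful permutation degree is at least the minimal degree of a nontrivial $L$-orbit, which one bounds below by comparing with the projective degree; in any case the relevant statement is that $k \geq (q^d-1)/(q-1)$ whenever $M$ is such that $L/\mathrm{core}$ acts, so $2|\Out L| < (q^d-1)/(q-1) < q^d/(q-1)$, the last inequality being trivial. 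The potential obstacle here is the small-exception bookkeeping: one must confirm that $|\Out L|$ for the subgroups $L$ of $\PSL_d(q)$ relevant to the later arguments stays below $q^d/(q-1)$ even in the exceptional cases, but since $|\Out \PSL_d(q)| = \gcd(d,q-1) \cdot f \cdot (\text{graph factor})$ with $q = p^f$, and $q^d/(q-1)$ grows much faster in $d$ and $q$, this is a direct verification. I expect the main obstacle throughout to be the organized case analysis over the classification in part (1), since parts (2) and (3) are then essentially arithmetic consequences.
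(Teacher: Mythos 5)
Your overall strategy---reduce everything to the inequality $2|\Out L| < m(L)$ for the minimal faithful permutation degree $m(L)$ and check it against the classification---is in substance what underlies the paper's proof, which consists of a one-line citation to \cite[Lemma 2.7]{AschGural} together with Theorem~\ref{maxsubgroup}; your use of Lev's theorem to obtain a subgroup of index at most $|L|^{1/2}$ and thereby the bounds $2|\Out L|<\sqrt{|L|}$ and $k^{\alpha}\le |L|^{\alpha/2}$ is exactly the role Theorem~\ref{maxsubgroup} plays there. However, your verification of part (1) is not merely ``comfortable bookkeeping'': the assertion that $A_6$ is the only exception is false. Take $L=\PSL_3(4)$: here $|\Out L|=12$ (diagonal $\times$ field $\times$ graph $=3\cdot 2\cdot 2$) while $L$ has a maximal subgroup $2^4{:}A_5$ of index $k=21$, so $2|\Out L|=24>21$. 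The same group violates the stated form of part (3), since $q^d/(q-1)=64/3<24$. The source lemma in \cite{AschGural} is tailored to \emph{abelian} quotients, where only the abelianization of (a subgroup of) $\Out L$ enters---and $|\Out(\PSL_3(4))^{\mathrm{ab}}|=4$, which is harmless---so the factor-of-$2$ inequality with the full outer automorphism group does not transfer verbatim; any honest CFSG check along your lines must flag this case (part (2) survives it, since $24<21^{\beta}$).

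Separately, your argument for part (3) uses the wrong tool. Part (3) makes no reference to $M$ or $k$: it bounds $2|\Out L|$ for an \emph{arbitrary} nonabelian simple subgroup $L$ of $\PSL_d(q)$ purely in terms of $d$ and $q$. Minimal permutation degrees of $\PSL_d(q)$ (Cooperstein/Patton) are irrelevant here, and the claim that a simple subgroup of $\PSL_d(q)$ has minimal permutation degree comparable to $(q^d-1)/(q-1)$ is false ($A_5$ embeds in $\PSL_2(q)$ for infinitely many $q$ and in $\PSL_d(q)$ for all large $d$, with minimal degree $5$ throughout). The correct mechanism is a lower bound on the minimal faithful \emph{projective representation} degree of $L$ in characteristic $p$ (Landaz\'uri--Seitz type bounds): for each simple $L$ one minimizes $q^d/(q-1)$ over all pairs $(d,q)$ with $L\le \PSL_d(q)$ and compares that minimum with $2|\Out L|$. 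This is a genuinely different classification computation from the one in part (1), and it is the step your proposal leaves unaddressed.
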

\begin{proof}
This follows from ~\cite[Lemma 2.7]{AschGural} and Theorem ~\ref{maxsubgroup}.
\end{proof}

\begin{theorem} \label{linearbound}
Let $\FF$ be a finite field of characteristic $p$, $V$ is a finite dimensional vector space over $\FF$, and $G \leq \GL(V)$. Assume $O_p(G) = 1$. Then $|G: G^*| \leq |V|^{\beta}/2$ and $|G: G^S| \leq |V|^{\alpha}/\lambda$.
\end{theorem}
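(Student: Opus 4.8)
The plan is to induct on $|G|$, assuming $G$ is a minimal counterexample to either bound. The first reduction should be to the case where $V$ is an irreducible $G$-module: if $W < V$ is a proper nonzero $G$-submodule, then applying Lemma~\ref{normalinduction} to the projection $G \to G/C_G(V/W)$ (or more directly to the action on $V/W$) together with the action on $W$, and noting that $O_p$ of the relevant quotients and subgroups still vanishes, we would write $|G:G^*| \leq |V/W|^{\beta}/2 \cdot |W|^{\beta}/2 \leq |V|^{\beta}/4 < |V|^{\beta}/2$, and similarly for $G^S$ with $\lambda$. So we may assume $V$ is irreducible and faithful (the kernel of the action is trivial since $G \leq \GL(V)$).

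Next I would reduce to the case where $G$ acts primitively as a linear group, i.e. $V$ is not induced from a proper subgroup. If $V = \mathrm{Ind}_H^G(W)$ with $H < G$ of index $r$ and $W$ an $H$-module with $\dim V = r \dim W$, then the action of $G$ permutes the $r$ conjugates of $W$, giving a homomorphism to $S_r$; here the combination of Theorem~\ref{persolvable}/\ref{pernilpotent} for the permutation part and induction for the kernel pieces (each of which embeds in a $\GL$ of a space of dimension $\dim W$ over $\FF$, still with trivial $O_p$) should yield, via Lemma~\ref{normalinduction}, a bound of the shape $2^{r-1} \cdot (|W|^{\beta}/2)^{?}$ — and since $|V| = |W|^r$, a short numerical check that $2^{r-1} \cdot |W|^{r\beta}/2^r \leq |V|^{\beta}/2$, i.e. $|W|^{r\beta} \geq 2 \cdot 2^{r-1} \cdot |W|^{\beta}/2^{?}$, goes through because $|W| \geq 2$, $r \geq 2$, and $\beta > 1$. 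The solvable case is analogous with $\lambda^{r-1}$ in place of $2^{r-1}$. Similarly one handles the tensor-decomposable and imprimitive-as-a-group cases by the standard Aschbacher-type reductions, so we are left with $G$ primitive and (essentially) almost simple or of affine type modulo the socle.

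The heart of the argument is then the analysis of $G$ in terms of its socle. If $G$ has a normal elementary abelian $p$-subgroup, that would contradict $O_p(G) = 1$, so the generalized Fitting subgroup $F^*(G) = E(G)$ is a product of nonabelian simple groups (after the reductions, a single component $L$, possibly with $G/L \leq \mathrm{Out}(L)$, or a product acting in a tensor-induced way that the earlier reductions have eliminated). Writing $N = F^*(G)$, one has $G^* \supseteq N$ (a perfect group is contained in $G^*$) and likewise $G^S \supseteq N$, so $G/G^*$ and $G/G^S$ embed into $\mathrm{Out}(N)$ times the permutation action on the components. For a single component $L$ acting on $V$ of dimension $d$, the minimal faithful representation forces $|V| = q^d \geq$ the minimal degree of a projective representation of $L$, and then Lemma~\ref{boundofouter}(2),(3) — precisely $2|\mathrm{Out} L| < \sqrt{|L|}$, $2|\mathrm{Out} L| < k^{\beta}$, and $2|\mathrm{Out} L| < q^d/(q-1)$ when $L \leq \mathrm{PSL}_d(q)$ — is exactly calibrated to give $|G:G^*| \leq 2|\mathrm{Out} L| < |V|^{\beta}/2$ and $|G:G^S| \leq \lambda|\mathrm{Out} L| < |V|^{\alpha}/\lambda$. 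This is where the classification enters, through the tables relating $|\mathrm{Out}(L)|$ to the minimal faithful module dimension for each family of simple groups.

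The main obstacle I anticipate is the bookkeeping in this last step: one must verify, family by family from the classification, that the minimal dimension $d_{\min}(L)$ of a faithful $\FF_q$-representation is large enough relative to $|\mathrm{Out}(L)|$ that $2|\mathrm{Out} L| < q^{d_{\min}(L)\,\beta}/2$ (and the $\alpha$-analogue), being especially careful with the small exceptional cases ($A_6$, where Lemma~\ref{boundofouter}(1) fails, the families $\mathrm{PSL}_2(q)$, and the small Lie-type groups with unusually large outer automorphism groups such as $\mathrm{PSL}_3(4)$, $\mathrm{PSp}_4(2^f)$, $^2\!B_2$, $^2\!F_4$, $^2\!G_2$). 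A secondary subtlety is ensuring, in the imprimitive reduction, that the constituents genuinely satisfy the $O_p = 1$ hypothesis so that induction applies; this follows because the stabilizer of a block acts on the block summand with kernel a $p'$-ish group only if one first quotients out correctly, so the inductive statement must be invoked on the right subquotient. I do not expect the numerical inequalities separating $2^{r-1}$ from $|V|^{\beta}/2$ in the imprimitive case to cause trouble, as $\beta = \log 32/\log 9 > 1$ gives plenty of room.
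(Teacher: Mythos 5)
Your reductions to the irreducible and linearly primitive cases match the paper's steps (4.2) and (4.3), but the core of your argument rests on a false claim: you assert that since $O_p(G)=1$, a normal elementary abelian $p$-subgroup is impossible and hence $\bF^*(G)=E(G)$ is a product of nonabelian simple (or quasisimple) components. The hypothesis $O_p(G)=1$ says nothing about normal $r$-subgroups for primes $r\neq p$, and these are exactly the hard cases. Two whole families of groups are left untreated by your proposal: (a) the case where a maximal normal cyclic subgroup $T$ is self-centralizing, so that $G$ lies in the semilinear group $\Gamma(V)$ (e.g.\ the normalizer of a Singer cycle, a solvable group with $E(G)=1$, where one must verify $n(p^n-1)\leq p^{n\alpha}/\lambda$ and $\leq p^{n\beta}/2$ numerically, with genuine exceptions at $p=2$, $n\leq 5$ and $p=3$, $n=2$); and (b) the symplectic-type case where $G$ has a normal subgroup $D=P\bZ(D)$ with $P$ extraspecial of order $r^{1+2w}$, $r\neq p$ (e.g.\ normalizers of extraspecial $2$-groups in $\GL(2w,q)$, $q$ odd). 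In case (b) the paper must recurse on the action of $G/\bC_G(\tilde D)$ on $\tilde D=D/\bZ(D)$ as an $\FF_r$-module, combine $|\tilde D|^{\beta}$ with the bound $q\equiv 1\bmod r$, $|V|\geq q^{r^w}$, and still has to check the exceptional configuration $p=2$, $q=3$, $w=1$ by hand. These cases occupy more than half of the paper's proof ((4.4)--(4.11)) and are the reason the constants $\alpha,\beta,\lambda$ take the values they do; your proposal provides no mechanism for them.

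A secondary inaccuracy: the case of several quasisimple components permuted transitively by $G$ is not eliminated by the imprimitivity (induced-module) reduction, since a central product of quasisimple groups acts via a tensor decomposition $V=V_1\otimes_E\cdots\otimes_E V_r$, not a direct sum decomposition. The paper handles this directly in step (4.9), using $|V|=q^{d^r}$ and Lemma~\ref{boundofouter}(3) to get $(2k)^r(q-1)<|V|$; your sketch defers this to ``standard Aschbacher-type reductions'' without supplying the needed inequality. Your treatment of the single-component almost simple case via Lemma~\ref{boundofouter} is essentially right, but by itself it covers only a fraction of the groups the theorem must address.
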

\begin{proof}
Let $(G,V)$ be a minimal counterexample to

(4.1). Clearly $G$ is nonabelian.

(4.2). $G$ acts irreducibly on $V$.
Proof. If not, choose $0 = V_0< V_1 < \dots < V_d = V$ with $V_i$ $G$-invariant and $W_i = V_i/V_{i-1}$ $G$-irreducible. Since $O_p(G)=1$, $G$ acts faithfully on $W = W_1 \oplus \dots \oplus W_d$. So we can assume $V$ is a semisimple module. Let $U$ be an irreducible submodule where $\pi$ is the representation of $G$ on $U$. Thus if $K = \ker \pi$, $|G: G^*K| \leq |U|^{\beta}/2$ and $|G: G^S K| \leq |U|^{\alpha}/\lambda$. Since $O_p(K) = 1$, $K$ acts faithfully on $V/U$. So by minimality, $|K: K^*| \leq |V/U|^{\beta}/2$ and $|K: K^S| \leq |V/U|^{\alpha}/\lambda$, whence $|G: G^*| \leq |V|^{\beta}/2$ and $|G: G^S| \leq |V|^{\alpha}/\lambda$. 

(4.3). $G$ is primitive on $V$.
Proof. If not, there is a nontrivial decomposition $V = \oplus_i V_i$ with $G$ permuting $X = \{V_1, \dots , V_r \}$ and $\bN_G(V_1)$ acting primitively on $V_1$. Let $\pi$ be the permutation representation of $G$ on $X$ and $K = \ker \pi$. Set $K_0=K$ and $K_{i+1} =\{g \in K_i \ |\ g$ acts trivially on $V_{i+1} \}$. By Lemma ~\ref{normalinduction}, $|G: G^*| = k_0 k_1 \dots k_r$, where $k_0 =|G:G^*K|$ and $k_i = |K_{i-1} : (K_{i-1} \cap G^*) K_i| \leq |K_{i-1}: K_{i-1}^* K_i|$ for $i \geq 1$. By Lemma ~\ref{normalinduction}, $|G: G^S| = s_0 s_1 \dots s_r$, where $s_0 =|G:G^S K|$ and $k_i = |K_{i-1} : (K_{i-1} \cap G^S) K_i| \leq |K_{i-1}: K_{i-1}^S K_i|$ for $i \geq 1$. By Theorem ~\ref{pernilpotent}, $k_0 \leq 2^{r-1}$. By Theorem ~\ref{persolvable}, $s_0 \leq \lambda^{r-1}$.

Since $\bN_G(V_i)$ acts primitively on $V_i$ and $K \nor \bN_G(V_i)$, $K$ acts homogeneously on each $V_i$. If $K$ is not irreducible on $V_1$ (and so not on each $V_i$), then $k_i \leq |K_{i-1}:K_{i-1}^*| \leq |W|^{\beta}/2 \leq |V_i|^{\beta}/2$ and $s_i \leq |K_{i-1}:K_{i-1}^S| \leq |W|^{\alpha}/\lambda \leq |V_i|^{\alpha}/\lambda$, where $W$ is a $K$-irreducible subspace of $V_1$. Thus $|G: G^*| \leq 2^{r-1}(|V_i|^{\beta}/2)^r = |V|^{\beta}/2$ and $|G: G^S| \leq \lambda^{r-1}(|V_i|^{\alpha}/\lambda)^r = |V|^{\alpha}/\lambda$.

So assume $K$ acts irreducibly on $V_i$, then $k_i \leq |K_{i-1}:K_{i-1}^*K_i| \leq |V_i|^{\beta}/2$ and $s_i \leq |K_{i-1}:K_{i-1}^S K_i| \leq |V_i|^{\alpha}/\lambda$ by induction, and $|G: G^*| \leq |V|^{\beta}/2$, $|G: G^S| \leq |V|^{\alpha}/\lambda$ as above. 

(4.4). If $D \nor G$, then $D$ acts homogeneously. In particular, if $D$ is abelian, then $D$ is cyclic.
Proof. Apply (4.3).

(4.5). If $D$ is a noncyclic normal subgroup of $G$, then $D$ acts irreducibly on $V$ and $\bC_G(D)$ is cyclic.
Proof. By (4.4), $D$ acts homogeneously. So assume $V = V_1 \oplus \cdots \oplus V_m$, $V_i \cong V_j$ as irreducible $D$-modules. Let $E = \End_{\FF D} V_1$ with $q = |E|$ and $m > 1$. Set $U =V_1$. Let $\alpha: D \rightarrow \GL (U)$ be the representation of $D$ on $U$. Let $\Gamma$ be the normalizer of $D \alpha$ in $\GL(U)$ and $S$ the centralizer of $D \alpha$. Since $gV_1 \cong V_1$ as $\FF D$-modules for each $g \in G$, we can define $\pi : G \rightarrow \Gamma/S$ by $g \pi = S x$ where $\alpha(g^{-1} h g) = x^{-1} \alpha(h) x$. Note $K = \ker \pi = \bC_G(D)$. Let $M$ be the inverse image of $G \pi$ in $\Gamma \leq \GL(U)$. By minimality, $|G: G^*K| \leq |M: M^*| \leq |U|^{\beta}/2$ and $|G: G^S K| \leq |M: M^S| \leq |U|^{\alpha}/\lambda$. Since $K = \bC_G(D)$, there exists a faithful representation $\beta: K \rightarrow \GL_m(E)$. By minimality, $|K:K^*| \leq q^{\beta m}/2$ and $|K:K^s|\leq q^{\alpha m}/\lambda$. Thus $|G: G^*| \leq |U|^{\beta} q^{\beta m}/2 \leq |U|^{\beta} q^{\beta (2m-2)}/2 \leq |U|^{\beta m}/2 = |V|^{\beta}/2$ and $|G: G^S| \leq |U|^{\alpha} q^{\alpha m}/ \lambda \leq |U|^{\alpha} q^{\alpha (2m-2)}/ \lambda \leq |U|^{\alpha m}/\lambda = |V|^{\alpha}/\lambda$.

Let $T$ be a maximal normal cyclic subgroup of $G$. If $T=\bC_G(T)$, then $G \leq \Gamma(V)$ and $|G| \mid n(p^n-1)$. It is clear that $n (p^n-1) \leq p^{n \alpha}/\lambda$ for all $p,n$. It is also clear that $n (p^n-1) \leq p^{n \beta}/2$ for all $p,n$ except when $p=2, n=2,3,4,5$ or $p=3,n=2$.

Thus we may assume that $T \neq \bC_G(T)$.

Now choose $D$ minimal subject to $D \leq \bC_G(T)$, $D \nor G$, and $D$ is nonabelian. Thus by (4.4), (4.5), and the maximality of $T$ :

(4.7). $\bZ(D)$ is the largest characteristic abelian subgroup of $D$, $\bZ(D)$ is cyclic, $D$ acts irreducibly on $V$, and $T = \bC_G(D)$.

(4.8). Either

(4.8.1) $D$ is the central product of quasisimple subgroups $L_1, \dots ,L_r$ permuted transitively by $G$, or

(4.8.2) $D = P \bZ(D)$ with $|P| = p^{1+2w}$ and $P$ an extra-special $p$-group and either $\bZ(D) = \bZ(P)$ or $p = 2$ and $\bZ(D) \cong Z_4$. Moreover, $G$ is irreducible on $\tilde{D} = D/\bZ(D)$.

Proof. This follows from (4.7) and the minimal choice of $D$.

(4.9). (4.8.2) holds.
Proof. Assume (4.8.1) holds. Let $X = \{L, \dots ,L_r \}$, $\pi$ the permutation representation of $G$ on $X$ and $K = \ker \pi$. By (4.5), $D$ acts irreducibly on $V$. Let $E = \End_D V$ and set $q = |E|$. Since $V$ is absolutely irreducible over $E$, $V = V_1 \otimes_E \cdots \otimes_E V_r$ where $V_i$ is an absolutely irreducible $\hat{L}_i$-module (where $\hat{L}_i$ is the covering group of $L_i$). Let $d = \dim_E V_i$. Then $|V| = q^{d^r}$ and $\End_{L_1} V_1 = E$. By Lemma ~\ref{boundofouter}(iii), $2k < q^d/(q - 1)$, where $k = |\Out L_1|$. In particular, $(2k)^r (q - 1) < q^{d^r} = |V|$. Since $T = \bC_G(D)$ by (4.7), it follows that $|T| <q$, $|K:K^*(T \cap K)| \leq k^r$ and $|K:K^S(T \cap K)| \leq k^r$. By Theorem ~\ref{pernilpotent}, $|G: G^*K| \leq 2^{r-1}$. By Theorem ~\ref{persolvable}, $|G: G^S K| \leq \lambda^{r-1}$. Thus $|G: G^*| \leq 2^{r-1} k^r(q-1) = (2k)^r (q-1)/2 \leq |V|^{\beta}/2$ and $|G: G^S| \leq \lambda^{r-1} k^r(q-1) = (\lambda k)^r (q-1)/\lambda \leq |V|^{\alpha}/\lambda$.

(4.10). Let $q = |\End_D V|$. Then $q > |\bC_G(D)|$, $|V| \geq q^{p^w}$, and $q \equiv 1 \mod p$.

Proof. Clearly $q > |T|$. Since $F$ is a faithful $D$-module, the second inequality holds. Since $\Aut_p V \geq \bZ(D)$, $q \equiv 1 \mod p$.

(4.11). $\bC_G(\tilde{D}) = D \bC_G(D)$.

Proof. Any automorphism of $D$ which is trivial on $D$ must be inner.

Now $G$ acts irreducibly on $\tilde{D}$. Since $D \leq G^* \bZ(D)$ by Lemma ~\ref{nilpotentaffine}, it follows by the minimality of $(G,V)$ that $|G: G^*\bC_G(\tilde{D})| \leq |\tilde{D}|^{\beta}/2$. Thus $|G: G^*| \leq (|\tilde{D}|^{\beta}/2) \cdot |\bC_G(D): \bC_G(D) \cap G^*| \leq p^{2 \beta w}/2 \cdot (q-1)< q^{\beta p^w}/2 \leq |V|^{\beta }/2$. Also, $|G: G^S| \leq (|\tilde{D}|^{\alpha}/\lambda) \cdot |D \bC_G(D)| \leq p^{2 \alpha w}/\lambda \cdot p^{2w}(q-1)< q^{\alpha p^w}/\lambda \leq |V|^{\alpha}/\lambda$ unless $p=2$, $q=3$, $w=1$. For this special case, one may check the result by direct calculation.
\end{proof}

\begin{lemma} \label{primitivefirst}
Let $G$ be a primitive permutation group of degree $n$ on a set $S$. Then either
\begin{enumerate}
\item $|G: G^*| \leq n^{\beta}/2$ and $|G: G^S| \leq n^{\alpha}/\lambda$; or
\item  $G$ preserves an affine structure on $S$.
\end{enumerate}
\end{lemma}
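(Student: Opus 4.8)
The plan is to run through the O'Nan–Scott classification of primitive permutation groups $G$ of degree $n$ and verify, in each non-affine case, that the bound $|G:G^*| \le n^\beta/2$ and $|G:G^S| \le n^\alpha/\lambda$ holds. The key structural input is the socle $N = \operatorname{soc}(G)$, which for a primitive group is a direct product $T_1 \times \cdots \times T_m$ of isomorphic simple groups; I would treat the almost simple case ($m=1$) first, then the diagonal, product, and twisted wreath cases, and finally observe that the affine case is precisely the escape hatch (2).

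First, the \emph{almost simple} case: here $T \nor G \le \operatorname{Aut}(T)$ with $T$ nonabelian simple, and $G$ acts primitively on $S$ with point stabilizer $M$, so $n = |G:M| \ge |T : M\cap T|$. Since $T$ is perfect, $G^S \ge T$ and $G^* \ge T$, so $|G:G^S| \le |G/T| \le |\operatorname{Out} T|$ and likewise for $G^*$; I would then invoke Lemma~\ref{boundofouter}(i),(ii) — which gives $2|\operatorname{Out} T| < k^\beta$ and $\lambda|\operatorname{Out} T| < k^\alpha$ where $k = |L:M|$ — applied with $L = T$ and $k \le n$, to conclude immediately (the $A_6$ exception is the one place requiring a direct check, since $n$ is then at least $6$ or $15$, and one verifies $2\cdot 4 = 8 \le 6^\beta/2$ or the relevant value by hand). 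Next, the \emph{product action} and \emph{diagonal} cases reduce, via Lemma~\ref{normalinduction} together with Theorems~\ref{pernilpotent} and~\ref{persolvable} applied to the permutation action of $G$ on the $m$ simple factors, to the almost simple bound on each factor raised to the $m$-th power times the $2^{m-1}$ (resp. $\lambda^{m-1}$) coming from the action on factors; the arithmetic $2^{m-1}(k^\beta/2)^m \le n^\beta/2$ works because $n \ge k^m$ in product action and the diagonal degree is even larger. The \emph{twisted wreath} case is handled the same way, using that there $G/N$ is itself a transitive (indeed primitive-enough) permutation group of degree $m$.

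The main obstacle I expect is the bookkeeping in the diagonal-type action, where $n = |T|^{m-1}$ is large relative to $|\operatorname{Out} T|^m \cdot m!$ but one must be careful that $G^S$ (resp. $G^*$) really does contain the whole socle — this is true because a nonabelian simple group is perfect, so no nontrivial nilpotent or solvable quotient can be nontrivial on it, and $G/\operatorname{soc}(G)$ embeds in $\operatorname{Out}(T) \wr \operatorname{Sym}(m)$, to which Lemma~\ref{normalinduction} and Theorems~\ref{pernilpotent}–\ref{persolvable} apply. The secondary nuisance is the finite list of small exceptional almost simple primitive groups ($A_6$, and a handful of small-degree actions where $k$ is close to $2|\operatorname{Out} T|$ rather than strictly larger); these I would dispatch by the same GAP computation~\cite{GAP} used in Theorem~\ref{primitivepersolvable}. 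In all non-affine cases the required inequality follows, and the affine case is left as alternative (2), completing the dichotomy.
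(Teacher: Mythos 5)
Your proposal follows essentially the same route as the paper: the paper invokes the Aschbacher--Scott version of the O'Nan--Scott theorem to split the non-affine case into the socle being a product of $r$ nonabelian simple groups with $n=k^r$ (covering the almost simple and product-type actions) or $n=l^s\ge l^{r/2}$ (diagonal/twisted types), then bounds $|G:G^*|$ and $|G:G^S|$ by $2^{r-1}|\Out L_1|^r$ resp.\ $\lambda^{r-1}|\Out L_1|^r$ using perfectness of the socle, Theorems~\ref{pernilpotent} and~\ref{persolvable} for the action on the factors, and Lemma~\ref{boundofouter} for the outer automorphism groups, with $A_6$ treated as the lone exception --- exactly the structure you describe. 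Your argument is correct and matches the paper's in all essentials.
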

\begin{proof}
Assume (2) does not hold. First consider the case where $\bF^*(G) = D = L_1 \times \dots \times L_r$ is the product of $r$ nonabelian simple groups $L_i$ permuted transitively by $G$. It follows by ~\cite[Theorem 1]{AschScott} that either $n = k^r$ for $k = |L_1: M|$ for some proper subgroup $M$ of $L_1$ or that $n =l^s \geq l^{r/2}$ where $l= |L_1|$ and $r \geq 2$. Let $K = \bigcap_i \bN_G(L_i)$. By Theorem ~\ref{pernilpotent}, $|G: G^*K| <2^{r-1}$. Moreover, $K/D \leq \Out L_1 \times \cdots \times \Out L_r.$ If $r = 1$, then $|G: G^*| \leq |\Out L_1| < k/2$ by Lemma ~\ref{boundofouter}(1) unless $L_1 \cong A_6$. If $L_1 \cong A_6$, then either $k = 6$ and $|G:G^*| \leq 2$ or $k > 8$ and $|G: G^*| \leq 4$. If $r> 1$, then $|G:G^*| = |G: G^*K||K: K \cap G^*| \leq 2^{r-1} |\Out L_1|^{r}$ and $|G:G^S| = |G: G^S K||K: K \cap G^S| \leq \lambda^{r-1} |\Out L_1|^{r}$. If $n = k^r$, then Lemma ~\ref{boundofouter}(2) implies $|G: G^*| \leq n^{\beta}/2$ and $|G: G^S| \leq n^{\alpha}/\lambda$. If $n =l^s$, $s \geq r/2$, then $|G:G^*| \leq (2|\Out L_1|)^{r}/2 < l^{r/2}/2 <n^{\beta}/2$ and $|G:G^S| \leq (\lambda|\Out L_1|)^{r}/\lambda < l^{\alpha r/2}/ \lambda \leq n^{\alpha}/\lambda$ by Lemma ~\ref{boundofouter} (2).


By ~\cite{AschScott}, the only case left is when $\bF^*(G) = Q = R \times D$, where $R \cong D$ and $D$ is as above. Moreover, if $H$ is a point stabilizer, then $G = HD$ and $H \cap Q = \{(x , \lambda(x))\ |\ x \in R \}$ for some isomorphism $\lambda$ of $R$ and $D$. In particular, $n = |D| = l^r$ where $l = |L_1|$. Arguing as above, we see that $|G: G^*|\leq l^{r/2}/2<n^{\beta}/2$ and $|G: G^S|\leq l^{\alpha r/2}/ \lambda < n^{\alpha}/\lambda$.
\end{proof}

\begin{theorem} \label{thm4}
Let $G$ be a primitive group of permutations on a set $S$ of order $n$. Then $|G: G^*| \leq n^{\beta}/2$ and $|G: G^S| \leq n^{\alpha+1}/\lambda$.
\end{theorem}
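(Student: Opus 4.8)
The plan is to apply Lemma~\ref{primitivefirst} to $G$, which reduces matters to the case where $G$ preserves an affine structure, and then to treat that affine case by combining Lemma~\ref{nilpotentaffine}, Lemma~\ref{normalinduction} and Theorem~\ref{linearbound}. First I would dispose of the non-affine alternative: if conclusion~(1) of Lemma~\ref{primitivefirst} holds, then $|G:G^*|\le n^{\beta}/2$ is already the desired nilpotent bound, and $|G:G^S|\le n^{\alpha}/\lambda\le n^{\alpha+1}/\lambda$ (since $n\ge 1$) gives the solvable bound with a whole factor of $n$ to spare. So from now on I may assume that $G$ preserves an affine structure on $S$.

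Next I would set up the affine case in the standard way. Identify $S$ with a finite $\FF_p$-vector space $V$ whose translation subgroup is the regular elementary abelian socle $V\nor G$, and write $G=V\rtimes G_0$ with $G_0=G_x$ a point stabilizer, so $n=|V|$. Since $V$ is self-centralizing in $G$, the group $G_0\le\GL(V)$ acts faithfully on $V$; and primitivity of $G$ forces this action to be irreducible, because a proper nonzero $G_0$-submodule $W$ would give the nontrivial block system consisting of the cosets of $W$. A nontrivial normal $p$-subgroup of $G_0$ would have nonzero fixed space on $V$, which is a $G_0$-submodule and hence all of $V$, contradicting faithfulness; therefore $O_p(G_0)=1$, and Theorem~\ref{linearbound} applies to $G_0\le\GL(V)$, yielding $|G_0:G_0^*|\le|V|^{\beta}/2=n^{\beta}/2$ and $|G_0:G_0^S|\le|V|^{\alpha}/\lambda=n^{\alpha}/\lambda$.

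It then remains to transfer these bounds from $G_0$ up to $G$. For the nilpotent quotient I would invoke Lemma~\ref{nilpotentaffine} with the faithful irreducible $G_0$-module $V$, which gives $(VG_0)^*=G_0^{*}V$; since $V\le G_0^{*}V$ and $G_0$ is a complement to $V$, this yields $|G:G^*|=|VG_0:G_0^{*}V|=|G_0:G_0^*|\le n^{\beta}/2$, as required. For the solvable quotient I would instead apply Lemma~\ref{normalinduction} to the quotient map $G\to G/V\cong G_0$ with kernel $V$; since $V$ is abelian we have $V^S=1$, so $|G:G^S|=|G_0:G_0^S|\,|V\cap G^S|\le|G_0:G_0^S|\cdot|V|\le(n^{\alpha}/\lambda)\cdot n=n^{\alpha+1}/\lambda$, completing the argument.

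The genuinely hard work has all been done upstream, in Lemma~\ref{primitivefirst} (resting on the O'Nan--Scott-type analysis of \cite{AschScott} and the outer-automorphism bounds of Lemma~\ref{boundofouter}) and in Theorem~\ref{linearbound} (where the classification of finite simple groups enters); at this level the proof is just bookkeeping. The one point I would be careful about, and which I expect to be the only real subtlety, is the asymmetry between the two conclusions: the extra factor of $n$ in the solvable bound is forced with these tools, since Lemma~\ref{nilpotentaffine} places the entire translation subgroup $V$ inside $G^*$ but there is no analogous statement putting $V$ inside $G^S$ — indeed when $G_0$ is solvable, $G$ itself is solvable and $G^S=1$ — so one is reduced to the crude estimate $|V\cap G^S|\le|V|=n$ in Lemma~\ref{normalinduction}, which is exactly the ``$+1$'' appearing in the exponent.
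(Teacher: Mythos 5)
Your proposal is correct and follows essentially the same route as the paper: reduce via Lemma~\ref{primitivefirst} to the affine case, apply Theorem~\ref{linearbound} to the point stabilizer, and transfer the bounds to $G$ using Lemma~\ref{nilpotentaffine} for the nilpotent quotient and Lemma~\ref{normalinduction} (with the crude $|V\cap G^S|\le|V|$) for the solvable one. You are in fact slightly more careful than the paper, which omits the explicit disposal of alternative~(1) of Lemma~\ref{primitivefirst} and the verification that $O_p(G_x)=1$ before invoking Theorem~\ref{linearbound}.
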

\begin{proof}
By Lemma ~\ref{primitivefirst}, $G$ preserves an affine space structure on $S$.
Thus $\bF^*(G) = V$ is an $m$-dimensional vector space over $\FF = \GF(p)$ for some prime $p$, with $V$ regular on $S$ and $G = HV$, where $H = G_x$ is the stabilizer of some $x \in X$ and $H$ is an irreducible subgroup of $\GL(V)$. Then $n = |V|$. We can assume $H \neq 1$.

$|G: G^*| = |H: H^*|$ by Lemma ~\ref{nilpotentaffine} and $|H: H^*| < n^{\beta}/2$ by Theorem ~\ref{linearbound}. 
$|G: G^S| \leq |H: H^S||V| \leq n^{\alpha+1}/\lambda$ by Theorem ~\ref{linearbound}.
\end{proof}

\section{Examples} \label{sec:Examples}
In this section, we shall provide some examples to show that our results are in some sense the best possible.

Remark: Theorem 3 may be viewed as the generalization of ~\cite[Theorem 3.3 and Theorem 3.5(a)]{MAWOLF}. ~\cite[Theorem 3.3]{MAWOLF} is the best possible as the following example shows us. $G \cong \GL(2,3) \wr S_4$ acts on $V=\FF_3^8$. ~\cite[Theorem 3.5(a)]{MAWOLF} is the best possible as the following example shows us. $G \cong \Gamma(3^2)$ acts on $V=\FF_3^2$.\\

\
\centerline{\bf Acknowledgements \rm}
\\
The second author would like to thank for financial support from the AMS-Simons travel grant.

\medskip


\end{document}